\tikzset{elegant/.style={smooth,thick,samples=50,cyan}}
\newtheorem{theorem}{Theorem}[section]
\newtheorem{prop}{Proposition}[section]
\newtheorem{remark}{Remark}[section]
\newcommand{\ml}{\mathcal}
\newcommand{\mb}{\mathbb}
\DeclareMathOperator{\intt}{int}
\DeclareMathOperator{\extt}{ext}
\DeclareMathOperator{\bdd}{bdd}
\title{A note on asymptotic profiles for the thermoelastic plate system}
\author[1]{Wenhui Chen\thanks{Wenhui Chen (wenhui.chen.math@gmail.com)}}
\affil[1]{School of Mathematics and Information Science, Guangzhou University, 510006 Guangzhou, China}
\author[2]{Yan Liu\thanks{Yan Liu (ly801221@163.com)}}
\affil[2]{Department of Applied Mathematics, Guangdong University of Finance, 510521 Guangzhou, China}
\date{}
\begin{document}

\maketitle
\begin{abstract}
	\medskip
We investigate the Cauchy problem for the thermoelastic plate system associated with Newton's law of cooling, where optimal growth ($n\leqslant 4$) or decay ($n\geqslant 5$) estimates and asymptotic profiles of solutions for large-time are studied. Especially, the additional lower-order term in the temperature equation weakens decay rates of the vertical displacement, and leads to a new leading term  comparing with the classical thermoelastic plates.\\
	
	\noindent\textbf{Keywords:} thermoelastic plate system, lower-order term, Cauchy problem, optimal estimate, asymptotic profile.\\
	
	\noindent\textbf{AMS Classification (2020)} 35G40, 35B40, 35Q79 
\end{abstract}
\fontsize{12}{15}
\selectfont
\setlength{\textwidth}{7.0in}
\section{Introduction}
Thin plates theory arises in practical applications of engineering (e.g. airport runways, raft foundations and road pavements). In the last century, several mathematical models describing thin plate motions in multifarious cases were built, for instance, Mindlin-Timoshenko models, von K\'arm\'an equations and thermoelastic plate systems. 

Let us take into consideration of a homogeneous, elastic and thermally isotropic plate subjecting to a temperature distribution. Combining with the second law of thermodynamics for irreversible process, the monographs \cite{Lagnese-Lions=1988,Lagnese=1989} modeled the well-known thermoelastic plate system with Fourier's law of heat conduction and Newton's law of cooling, namely,
\begin{align}\label{EQ-GENERAL-MASS}
	\begin{cases}
		u_{tt}+\Delta^2 u+\Delta\theta=0,\\
		\theta_t-\Delta\theta+\sigma\theta-\Delta u_t=0.
	\end{cases}
\end{align}
Here, the scalar unknowns $u=u(t,x)$ and $\theta=\theta(t,x)$ denote, respectively, the vertical displacement and  the temperature (relative to some reference temperature). The non-negative constant $\sigma$ in the temperature equation is related to the heat transfer coefficient. For the heuristic derivations of the thermoelastic plates \eqref{EQ-GENERAL-MASS}, we refer interested readers to \cite[Chapter I.6]{Lagnese-Lions=1988}.

There are numerous studies for the thermoelastic plate system \eqref{EQ-GENERAL-MASS} from the communities of PDEs, controllability, inverse problems and dynamical systems in past thirty years (see \cite{Lagnese-Lions=1988,Kim=1992,Shibata=1994,Liu-Renardy=1995,Munoz-Rivera-Racke=1995,Liu-Liu=1997,Liu-Zheng=1997,Lebeau-Zuazua=1998,Denk-Racke=2006,Naito-Shibata=2009,Lasiecka-Wilke=2013,Said-Houari=2013,Racke-Ueda=2016,Racke-Ueda=2017,Denk-Shibata=2017,Bezerra-Carbone-Nascimento=2019} and references therein). Indeed, most of the recent researches investigated the thermoelastic plate system \eqref{EQ-TEP-MASS} with $\sigma=0$. For example, the corresponding Cauchy problem for \eqref{EQ-TEP-MASS} with $\sigma=0$ has been deeply studied in \cite{Said-Houari=2013,Racke-Ueda=2016,Chen-Ikehata=2022-plate}, in which sharp decay properties of an energy term $(u_t,\Delta u,\theta)$ have been found. Among these results, the authors of \cite{Chen-Ikehata=2022-plate} discovered the critical dimension $n=4$ for the vertical displacement to distinguish different large-time behaviors, namely, optimal growth ($n\leqslant 3$), bounded ($n=4$) and decay ($n\geqslant 5$) estimates. Here, the optimality is guaranteed by the same behaviors for upper and lower bounds. Additionally, they introduced the large-time asymptotic profile $\psi=\psi(t,x)$ of solution by
\begin{align}\label{PSI}
\psi(t,x):=\ml{F}_{\xi\to x}^{-1}\left(\frac{1}{|\xi|^2}\left(\mathrm{e}^{-a_0|\xi|^2t}-\cos(a_2|\xi|^2t)\mathrm{e}^{-a_1|\xi|^2t}\right)\right)P_{\Psi_0}+\ml{F}_{\xi\to x}^{-1}\left(\frac{\sin(a_2|\xi|^2t)}{a_2|\xi|^2}\mathrm{e}^{-a_1|\xi|^2t}\right)P_{\Psi_1}
\end{align}
carrying $\Psi_0:=2a_1u_1+\theta_0$ and $\Psi_1:=(a_0^2+a_2^2-a_1^2)u_1+(a_0-a_1)\theta_0$, where the positive constants $a_0,a_1,a_2$ are defined in the second statement of Proposition \ref{PROP-CHARACTERISTIC-ROOTS}. Nevertheless, the thermoelastic plate system \eqref{EQ-TEP-MASS} with $\sigma>0$  in the whole space $\mb{R}^n$ so far did not been explored yet. Concerning the other works on the model \eqref{EQ-GENERAL-MASS} with $\sigma>0$, we refer \cite{Lagnese-Lions=1988,Lagnese=1989} for the exact controllability problem, and \cite{Avalos-Lasiecka=1998,Liu-Zheng=1997} for the exponential stability of semigroups.

In the present work, we consider the corresponding Cauchy problem for the thermoelastic plate system \eqref{EQ-GENERAL-MASS}, namely,
\begin{align}\label{EQ-TEP-MASS}
\begin{cases}
u_{tt}+\Delta^2 u+\Delta\theta=0,\\
\theta_t-\Delta\theta+\sigma\theta-\Delta u_t=0,\\
(u,u_t,\theta)(0,x)=(u_0,u_1,\theta_0)(x),
\end{cases}
\end{align}
with the constant $\sigma>0$, and $(t,x)\in\mb{R}_+\times\mb{R}^n$ for any $n\geqslant 1$. Our main interests are large-time asymptotic behaviors of the vertical displacement influenced by the lower-order term $+\sigma\theta$ in the temperature equation \eqref{EQ-TEP-MASS}$_2$ from Newton's law of cooling. By applying WKB method and Fourier analysis, we derive optimal growth estimates when $n\leqslant 4$ and decay estimates when $n\geqslant 5$ for the vertical displacement in the $L^2$ framework. Among them, as we will state in Remark \ref{Rem-Key} and Table \ref{tab:table1}, the growth rates when $n\leqslant 4$ are the same as those for the pure plate model, but the growth rate when $n=4$ and decay rates when $n\geqslant 5$ are weakened by the lower-order term $+\sigma\theta$ comparing with the result in \cite{Chen-Ikehata=2022-plate}. Additionally, the dominant Fourier multiplier of asymptotic profiles has been greatly changed into 
\begin{align*}
\ml{F}^{-1}_{\xi\to x}\left(\frac{\sin(|\xi|^2t)}{|\xi|^2}\mathrm{e}^{-\frac{1}{2\sigma}|\xi|^4t}\right),
\end{align*}
which causes the weakened effect of decay rates when $n\geqslant 4$. Indeed, the effect of the lower-order term $+\sigma\theta$ in the temperature (parabolic) equation will propagate throughout $(\Delta\theta,-\Delta u_t)^{\mathrm{T}}$ to the plate model. It leads to slower decay rates for $n\geqslant 4$ in comparison with the classical model  with $\sigma=0$.

\medskip
\noindent\textbf{Notations:} Let us take the following zones:
\begin{align*}
	\ml{Z}_{\intt}(\varepsilon_0):=\{|\xi|\leqslant\varepsilon_0\ll1\},\ \ 
	\ml{Z}_{\bdd}(\varepsilon_0,N_0):=\{\varepsilon_0\leqslant |\xi|\leqslant N_0\},\ \  
	\ml{Z}_{\extt}(N_0):=\{ |\xi|\geqslant N_0\gg1\}.
\end{align*}
Additionally, the cut-off functions $\chi_{\intt}(\xi),\chi_{\bdd}(\xi),\chi_{\extt}(\xi)\in \mathcal{C}^{\infty}$ own  supports in their corresponding zones $\ml{Z}_{\intt}(\varepsilon_0)$, $\ml{Z}_{\bdd}(\varepsilon_0/2,2N_0)$ and $\ml{Z}_{\extt}(N_0)$, respectively, so that $\chi_{\bdd}(\xi)=1-\chi_{\intt}(\xi)-\chi_{\extt}(\xi)$. The relation $f\lesssim g$ means that there exists a positive constant $C$ fulfilling $f\leqslant Cg$, which may be changed in different lines, analogously, for $f\gtrsim g$. Furthermore, the asymptotic behavior $f\simeq  g$ holds if and only if $g\lesssim f\lesssim g$. We denote $\langle\cdot,\cdot\rangle$ by the inner product in Euclidean space. Let us recall the weighted $L^1$ space as follows:
\begin{align*}
	L^{1,1}:=\left\{f\in L^1 \ \big|\ \|f\|_{L^{1,1}}:=\int_{\mb{R}^n}(1+|x|)|f(x)|\mathrm{d}x<\infty \right\}.
\end{align*}
The means of a summable function $f$ are denoted by $P_f:=\int_{\mb{R}^n}f(x)\mathrm{d}x$ as well as $M_f:=\int_{\mb{R}^n}xf(x)\mathrm{d}x$. To complete the introduction, we take the time-dependent functions
\begin{align}\label{Decay-fun}
	\ml{D}_n(t):=\begin{cases}
		t^{1-\frac{n}{4}}&\mbox{if}\ \ n\leqslant 3,\\
		\sqrt{\ln t}&\mbox{if}\ \ n=4,\\
		t^{\frac{1}{2}-\frac{n}{8}}&\mbox{if}\ \ n\geqslant 5,
	\end{cases}
\ \ \mbox{and}\ \
\ml{B}_n(t):=\begin{cases}
	t^{\frac{1}{4}}&\mbox{if}\ \ n=1,\\
	\sqrt{\ln t}&\mbox{if}\ \ n=2,\\
	t^{\frac{1}{4}-\frac{n}{8}}&\mbox{if}\ \ n\geqslant 3,
\end{cases} 
\end{align}
to be the growth or decay coefficients later.

\section{Main results}
Our first result contributes to optimal estimates for the vertical displacement in the $L^2$ norm. Especially, the solution grows to infinite polynomially (lower-dimension $n=1,2,3$) or  logarithmically (critical-dimension $n=4$) as $t\to\infty$.
\begin{theorem}\label{THM-OPTIMAL-DECAY}
	Let us consider the thermoelastic plate system \eqref{EQ-TEP-MASS} with $\sigma>0$ and carrying initial datum $u_0,u_1,\theta_0\in L^2\cap L^1$. Then, its vertical displacement fulfills the following optimal estimates:
	\begin{align}\label{Optimal-Est}
		\ml{D}_n(t)|P_{u_1}|\lesssim \|u(t,\cdot)\|_{L^2}\lesssim\ml{D}_n(t)\|(u_0,u_1,\theta_0)\|_{(L^2\cap L^1)^3}
	\end{align}
for $t\gg1$, where the time-dependent coefficient $\ml{D}_n(t)$ was defined in \eqref{Decay-fun}. Namely, if $|P_{u_1}|\neq0$, then the optimal estimates $\|u(t,\cdot)\|_{L^2}\simeq\ml{D}_n(t)$ hold for any $n\geqslant 1$ as $t\gg1$.
\end{theorem}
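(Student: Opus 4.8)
The plan is to pass to the Fourier side and analyze the solution $\widehat{u}(t,\xi)$ zone by zone. First I would diagonalize (or at least apply the WKB/Fourier-multiplier representation that the paper presumably establishes via Proposition~\ref{PROP-CHARACTERISTIC-ROOTS}): in the interior zone $\ml{Z}_{\intt}(\varepsilon_0)$ the characteristic roots behave like $-a_0|\xi|^2$ and $-a_1|\xi|^2 \pm i a_2|\xi|^2$ (the constants $a_0,a_1,a_2$ from the cited proposition), so that $\widehat{u}(t,\xi)$ is, up to controllable remainders, a combination of $\mathrm{e}^{-a_0|\xi|^2 t}$ and $\cos(a_2|\xi|^2 t)\mathrm{e}^{-a_1|\xi|^2 t}$ acting on $\widehat{u}_0$, plus $|\xi|^{-2}$-weighted versions of the same oscillating-parabolic multipliers acting on $\widehat{u}_1$ and $\widehat\theta_0$. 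The dominant piece is the one carrying the $|\xi|^{-2}\sin(a_2|\xi|^2 t)\mathrm{e}^{-a_1|\xi|^2 t}$ multiplier against the datum combination $\Psi_1$, exactly the object flagged in the introduction; its low-frequency $L^2$ norm is what produces $\ml{D}_n(t)$. In the bounded and exterior zones $\ml{Z}_{\bdd}$ and $\ml{Z}_{\extt}$, I expect (and would cite from the earlier sections) exponential decay $\mathrm{e}^{-ct}$ of $\widehat{u}(t,\xi)$, so those zones contribute only exponentially small terms and are irrelevant for the large-time profile.

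For the \textbf{upper bound}, I would estimate $\|u(t,\cdot)\|_{L^2}^2 = \int_{\mb{R}^n}|\widehat u(t,\xi)|^2\,\mathrm{d}\xi$ by splitting over the three zones. The interior contribution is bounded by $\|(u_0,u_1,\theta_0)\|_{L^1}^2$ times an integral of the form $\int_{|\xi|\le\varepsilon_0} |\xi|^{-4}\,|\sin(a_2|\xi|^2 t)|^2\,\mathrm{e}^{-2a_1|\xi|^2 t}\,\mathrm{d}\xi$ (using $|\widehat f(\xi)|\le\|f\|_{L^1}$), plus milder integrals with $|\xi|^{-2}$ or $|\xi|^0$ weights from the non-dominant multipliers. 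A substitution $r=|\xi|$, passing to polar coordinates and then scaling $s = r^2\sqrt t$ (or $s=r^2 t$), reduces this to $t^{1-n/4}$ for the leading term when $n\le 3$, to $\sqrt{\ln t}$ at the borderline $n=4$ (the logarithm coming from $\int_{1/\sqrt t}^{1} r^{-1}\,\mathrm{d}r$ after the sine is bounded crudely and the exponential controls the large-$r$ end), and to $t^{1/2-n/8}$ when $n\ge5$ --- which is precisely $\ml{D}_n(t)$. One has to check the non-dominant multipliers ($|\xi|^{-2}\mathrm e^{-a_0|\xi|^2t}$-type terms, the $\widehat u_0$ terms with no singular weight, and the WKB remainder) contribute no worse, which is routine scaling bookkeeping; together with the exponentially small bounded/exterior contributions this yields $\|u(t,\cdot)\|_{L^2}\lesssim \ml{D}_n(t)\|(u_0,u_1,\theta_0)\|_{(L^2\cap L^1)^3}$.

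For the \textbf{lower bound}, the standard strategy is to isolate the dominant term and show the remainder is of strictly lower order. I would write $\widehat u(t,\xi) = \widehat\psi_{\mathrm{dom}}(t,\xi) + (\text{lower order})$ on $\ml{Z}_{\intt}$, where $\widehat\psi_{\mathrm{dom}}$ carries the multiplier $\frac{\sin(a_2|\xi|^2t)}{a_2|\xi|^2}\mathrm e^{-a_1|\xi|^2t}$ against $\widehat{\Psi_1}(\xi)$, and then use the reverse triangle inequality in $L^2(\ml{Z}_{\intt})$. For the main term, replace $\widehat{\Psi_1}(\xi)$ by its value at $\xi=0$, i.e.\ $P_{\Psi_1}$; since $\Psi_1=(a_0^2+a_2^2-a_1^2)u_1+(a_0-a_1)\theta_0$ one checks from the definitions of $a_0,a_1,a_2$ (Proposition~\ref{PROP-CHARACTERISTIC-ROOTS}) that the coefficient of $u_1$ is a nonzero constant and, crucially, $P_{\Psi_1}$ reduces to a nonzero multiple of $P_{u_1}$ up to terms that get absorbed --- this is where the hypothesis $|P_{u_1}|\neq 0$ enters and delivers the factor $|P_{u_1}|$ in \eqref{Optimal-Est}. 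The error from freezing the symbol, $\int_{|\xi|\le\varepsilon_0}|\xi|^{-4}|\widehat{\Psi_1}(\xi)-P_{\Psi_1}|^2\mathrm e^{-2a_1|\xi|^2t}\,\mathrm{d}\xi$, is handled by $|\widehat{\Psi_1}(\xi)-P_{\Psi_1}|\lesssim|\xi|\,\|\Psi_1\|_{L^{1,1}}$ (mean-zero-type / Lipschitz estimate on the Fourier transform), which removes two powers of $|\xi|$ and yields a faster-decaying contribution; similarly the other non-dominant multipliers and the WKB remainder are shown by the same scaling to be $o(\ml{D}_n(t))$. A mild subtlety: $\sin(a_2|\xi|^2t)$ oscillates, so after polar coordinates and scaling $s=r^2\sqrt t$ one must show $\int_0^{c\sqrt t} s^{\,\text{(something)}}\sin^2(a_2 s/\sqrt t\cdot\sqrt t\,)\cdots$ --- more precisely one keeps $\sin^2(a_2 s)$ with $s=r^2 t^{1/2}$ appropriately, and uses that $\int_0^\infty s^{n/2-3}\sin^2(a_2 s)\mathrm e^{-2a_1 s^2/\sqrt t}\mathrm d s$ converges to a positive constant (for $n\le 3$) or diverges logarithmically ($n=4$) or with the right power ($n\ge5$), matching the upper bound.

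The main obstacle I anticipate is twofold. First, the borderline case $n=4$: one must extract the $\sqrt{\ln t}$ with a \emph{matching} constant on both sides, which requires care with the oscillatory factor --- bounding $\sin^2$ by $1$ gives the upper $\sqrt{\ln t}$, but for the lower bound one needs that $\sin^2(a_2|\xi|^2t)$ is bounded below \emph{on average} (its mean is $1/2$), so the logarithmic integral $\int_{t^{-1/2}}^{\varepsilon_0} r^{-1}\,\mathrm d r\simeq \ln t$ genuinely survives; making this rigorous typically uses a change of variables that turns the $r$-integral into one where the oscillation has period $O(1)$ and then a periodicity/averaging argument. Second, verifying that \emph{every} non-dominant piece --- the three terms with $\mathrm e^{-a_0|\xi|^2 t}$ versus $\mathrm e^{-a_1|\xi|^2 t}$, the $\widehat u_0$ contributions (no singular $|\xi|^{-2}$ weight, hence decaying like $t^{-n/4}$, strictly faster than $\ml{D}_n$ for $n\le 3$ and than $\ml{B}_n$-type rates in general), and the WKB remainder estimated in the earlier sections --- is indeed $o(\ml{D}_n(t))$ uniformly; this is conceptually clear by the scaling heuristic ($|\xi|^{-2}$-weighted oscillatory-parabolic terms dominate) but the bookkeeping across the dimension cases $n\le3$, $n=4$, $n\ge5$ is where errors creep in. Everything else --- the exponential estimates on $\ml{Z}_{\bdd}\cup\ml{Z}_{\extt}$, the Plancherel reductions, the $L^1\hookrightarrow L^\infty$ bound on $\widehat f$ --- is standard.
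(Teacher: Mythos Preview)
Your proposal rests on a misidentification of the small-frequency behavior of the characteristic roots, and this is exactly the new phenomenon the paper isolates. You assert that in $\ml{Z}_{\intt}(\varepsilon_0)$ the roots behave like $-a_0|\xi|^2$ and $-a_1|\xi|^2\pm i a_2|\xi|^2$, but Proposition~\ref{PROP-CHARACTERISTIC-ROOTS} gives that expansion only in the \emph{exterior} zone. For small $|\xi|$ with $\sigma>0$ the real root is $\lambda_1=-\sigma+\ml{O}(|\xi|^2)$ (hence contributes an exponentially decaying, negligible term), while the complex pair satisfies $\lambda_{\mathrm R}=-\frac{1}{2\sigma}|\xi|^4+\ml{O}(|\xi|^6)$ and $\lambda_{\mathrm I}=|\xi|^2+\ml{O}(|\xi|^6)$. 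The dominant small-frequency multiplier is therefore $\widehat{J}_0=|\xi|^{-2}\sin(|\xi|^2t)\,\mathrm e^{-\frac{1}{2\sigma}|\xi|^4t}$, with a \emph{quartic} exponent, not the quadratic one you use; and it acts on $\widehat u_1$ alone, not on the combination $\Psi_1$ (which pertains to the $\sigma=0$ profile $\psi$ recalled in the introduction).

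This is not cosmetic. With your kernel $|\xi|^{-2}\sin(a_2|\xi|^2t)\,\mathrm e^{-a_1|\xi|^2t}$ the substitution $s=|\xi|^2t$ yields $\|\cdot\|_{L^2}^2\simeq t^{2-n/2}$ for \emph{every} $n\geqslant1$, since $\int_0^\infty s^{n/2-3}\sin^2(a_2s)\,\mathrm e^{-2a_1s}\,\mathrm ds$ converges; that is the $\sigma=0$ rate of \cite{Chen-Ikehata=2022-plate}, not $\ml{D}_n(t)$. Your claimed outputs $\sqrt{\ln t}$ at $n=4$ and $t^{1/2-n/8}$ for $n\geqslant5$ simply cannot emerge from that kernel; they arise precisely from the competition between the two scales $|\xi|\sim t^{-1/2}$ (oscillation) and $|\xi|\sim t^{-1/4}$ (quartic exponential), which your setup lacks. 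The paper instead uses the correct $\widehat J_0$, invokes the sharp estimate $\|\widehat J_0(t,\cdot)\|_{L^2}\simeq\ml{D}_n(t)$ from \cite{Ikehata=2021}, and for the lower bound replaces $\widehat u_1(\xi)$ by $P_{u_1}$ via a splitting over $\{|y|\leqslant t^{\alpha_0}\}$ versus $\{|y|\geqslant t^{\alpha_0}\}$ together with the mean value inequality---thereby avoiding the $L^{1,1}$ hypothesis your Lipschitz bound $|\widehat f(\xi)-P_f|\lesssim|\xi|\,\|f\|_{L^{1,1}}$ would require, since Theorem~\ref{THM-OPTIMAL-DECAY} assumes only $L^1$.
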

\begin{remark}\label{Rem-Key}
	Let us recall the large-time behavior of the pure plate model
	\begin{align}\label{EQ-PLATE}
		\begin{cases}
		w_{tt}+\Delta^2 w=0,\\
		(w,w_t)(0,x)=(w_0,w_1)(x),
		\end{cases}
	\end{align}
with $(t,x)\in\mb{R}_+\times\mb{R}^n$. The author of \cite{Ikehata=2021-Dec} derived optimal growth estimates
\begin{align}\label{decay-w}
\|w(t,\cdot)\|_{L^2}^2\simeq\begin{cases}
	t^{2-\frac{n}{2}}&\mbox{if}\ \ n\leqslant 3,\\
	\ln t&\mbox{if}\ \ n=4,
\end{cases}
\end{align}
for $t\gg1$, where initial datum are taken in $L^2\cap L^1$. For another, the authors of \cite{Chen-Ikehata=2022-plate} derived the optimal estimates for the classical thermoelastic plate system (or the model \eqref{EQ-TEP-MASS} with $\sigma=0$) as follows:
\begin{align}\label{EQ-TEP-CLASSICAL}
	\begin{cases}
		v_{tt}+\Delta^2 v+\Delta\theta=0,\\
		\theta_t-\Delta\theta-\Delta v_t=0,\\
		(v,v_t,\theta)(0,x)=(v_0,v_1,\theta_0)(x).
	\end{cases}
\end{align}
To be specific, concerning all dimensions $n\geqslant 1$, they got
\begin{align}\label{decay-v}
\|v(t,\cdot)\|_{L^2}^2\simeq t^{2-\frac{n}{2}}
\end{align}
for $t\gg1$. Comparing with  optimal estimates \eqref{EQ-PLATE} in Theorem \ref{THM-OPTIMAL-DECAY}, \eqref{decay-w} in \cite{Ikehata=2021-Dec}, and \eqref{decay-v} in \cite{Chen-Ikehata=2022-plate}, we claim the following statements (see Table \ref{tab:table1} as the supplementary):
\begin{itemize}
	\item In the lower-dimensions $n=1,2,3$, the growth rates of the  models \eqref{EQ-TEP-MASS} and \eqref{EQ-TEP-CLASSICAL} are the same as those for the pure plate model \eqref{EQ-PLATE}. That is to say that the plate equation plays the decisive role in the general thermoelastic plate system \eqref{EQ-TEP-MASS} for any $\sigma\geqslant 0$.
	\item In the critical-dimension $n=4$, the lower-order term $+\sigma\theta$ subdues the estimates in the classical thermoelastic plates \eqref{EQ-TEP-CLASSICAL} so that the plate equation has the decisive influence again with the same growth rate $\log t$ as the one in the plate model \eqref{EQ-PLATE}.
	\item In the higher-dimensions $n\geqslant 5$, the lower-order term $+\sigma\theta$ weakens the decay rates from $t^{2-\frac{n}{2}}$ to $t^{1-\frac{n}{4}}$. This weakened effect is originated  from the lower-order term in the temperature equation. In some sense, the lower-order term propagates via the coupling $(\Delta \theta,-\Delta u_t)^{\mathrm{T}}$. This effect is quite different from the evolution equations with the mass term, e.g. heat equations with mass \cite[Chapter 12.2]{Ebert-Reissig-book}, Klein-Gordon equation \cite[Chapter 11.3.4]{Ebert-Reissig-book}, damped waves with mass \cite{N-Palmieri-R}, and strongly damped waves with mass \cite{Dabbicco-Ikehata=2019}. To the best of authors' knowledge, it seems to be the first example that an addition of lower-order term (sometimes we call it mass term) causes weakened dissipative properties.
\end{itemize}
\begin{table}[h!]
	\begin{center}
		\caption{Influence from the plate model, Fourier's law and the lower-order term}
		\label{tab:table1}
		\begin{tabular}{cccc} 
			\toprule
			\multirow{2}*{Dimensions} & $n\leqslant 3$ & $n=4$ & $n\geqslant 5$\\
			& (Lower-dimensions) & (Critical-dimension) & (Higher-dimensions)  \\
			\midrule
			\multirow{2}*{Pure plate} & \multirow{2}*{$t^{2-\frac{n}{2}}$} & \multirow{2}*{$\log t$} & \multirow{2}*{--} \\
			& & &\\  
			\midrule
			Thermoelastic plates  & \multirow{2}*{$t^{2-\frac{n}{2}}$} & \multirow{2}*{$1$} & \multirow{2}*{$t^{2-\frac{n}{2}}$} \\
			without lower-order term ($\sigma=0$) & & &\\
			\midrule
			Thermoelastic plates  & \multirow{2}*{$t^{2-\frac{n}{2}}$} & \multirow{2}*{$\log t$} & \multirow{2}*{$t^{1-\frac{n}{4}}$}\\
			with lower-order term ($\sigma>0$)& & &\\
			\bottomrule
			\multicolumn{4}{l}{\emph{$*$Optimal estimates of the vertical displacement in the $L^2$ framework.}} 
		\end{tabular}
	\end{center}
\end{table}
\end{remark}
\begin{remark}
In our result for optimal estimates, we just require additional $L^1$ regularity for initial datum rather than $L^{1,1}$ regularity in \cite{Ikehata=2021-Dec,Chen-Ikehata=2022-plate}. Indeed, the weighted $L^1$ assumption for the plate models \cite{Ikehata=2021-Dec,Chen-Ikehata=2022-plate} can be relaxed by $L^1$ assumption by following our approach.
\end{remark}

Before stating the result concerning asymptotic profiles, let us take
\begin{align}\label{Profile}
	\varphi(t,x):=J_0(t,x)P_{u_1}+\left\langle\nabla J_0(t,x), M_{u_1}\right\rangle+H(t,x)P_{u_1}+J_1(t,x)P_{u_0}-\sigma^{-1}\Delta J_0(t,x)P_{\theta_0},
\end{align}
where some functions originated from higher-order diffusion-plates are chosen by
\begin{align*}
J_0(t,x):=\ml{F}^{-1}_{\xi\to x}\left(\frac{\sin(|\xi|^2t)}{|\xi|^2}\mathrm{e}^{-\frac{1}{2\sigma}|\xi|^4t}\right),\ \ J_1(t,x):=\ml{F}^{-1}_{\xi\to x}\left(\cos(|\xi|^2t)\mathrm{e}^{-\frac{1}{2\sigma}|\xi|^4t}\right),
\end{align*}
as well as
\begin{align*}
	H(t,x):=\frac{t}{8\sigma^2}\ml{F}^{-1}_{\xi\to x}\left[\left((2\sigma+1)\cos(|\xi|^2t)+4\sin(|\xi|^2t)\right)|\xi|^4\mathrm{e}^{-\frac{1}{2\sigma}|\xi|^4t}\right].
\end{align*}
 We now may show the asymptotic profile of solution, where the time-dependent coefficient of optimal estimates \eqref{Optimal-Est} has been improved as $t\gg1$ when we subtract the profile $\varphi=\varphi(t,x)$. Particularly, in the higher-dimensional case $n\geqslant 3$, we claim $u(t,\cdot)\to\varphi(t,\cdot)$ as $t\to\infty$ in the $L^2$ framework.
\begin{theorem}\label{THM-OPTIMAL-LEADING}
		Let us consider the thermoelastic plate system \eqref{EQ-TEP-MASS} with $\sigma>0$ and carrying initial datum $u_0,u_1,\theta_0\in L^2\cap L^{1,1}$. Then, its vertical displacement fulfills the following refined estimates:
	\begin{align*}
\|u(t,\cdot)-\varphi(t,\cdot)\|_{L^2}=o\big(\ml{B}_n(t)\big)
	\end{align*}
	for $t\gg1$, where the time-dependent coefficient $\ml{B}_n(t)$ was defined in \eqref{Decay-fun}.
\end{theorem}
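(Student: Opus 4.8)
The plan is to work on the Fourier side, where the second-order ODE system for $(\hat u, \hat u_t, \hat\theta)$ decouples via the characteristic roots of Proposition \ref{PROP-CHARACTERISTIC-ROOTS}. The estimate is an $L^2$ statement and, by Plancherel, it suffices to control $\|\widehat{u}(t,\cdot)-\widehat{\varphi}(t,\cdot)\|_{L^2}$. As in the proof of Theorem \ref{THM-OPTIMAL-DECAY}, the large-time behavior is governed entirely by the interior zone $\ml{Z}_{\intt}(\varepsilon_0)$: in $\ml{Z}_{\bdd}$ and $\ml{Z}_{\extt}$ the solution (and, by construction, the profile $\varphi$) decays exponentially, hence those contributions are $O(\mathrm{e}^{-ct})$ and are negligible against any $\ml{B}_n(t)$. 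So I would first record the zone decomposition and dispose of the middle and high frequencies, then concentrate on $\chi_{\intt}(\xi)\big(\widehat{u}-\widehat{\varphi}\big)$.

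In the interior zone I would expand the characteristic roots for $|\xi|\to 0$. The key is that, with $\sigma>0$, the three roots behave like $\lambda_{1,2}=\pm i|\xi|^2 - \tfrac{1}{2\sigma}|\xi|^4 + O(|\xi|^6)$ (a pair of weakly damped oscillating roots, the "plate-type" modes whose fourth-order dissipation $\tfrac{1}{2\sigma}|\xi|^4$ is exactly what appears in $J_0,J_1$) and $\lambda_3 = -\sigma + O(|\xi|^2)$ (a strongly damped "thermal" mode that contributes only exponentially decaying terms). Writing $\widehat{u}(t,\xi)$ as the exact linear combination of $\mathrm{e}^{\lambda_j t}$ with coefficients that are rational functions of the data $\widehat u_0,\widehat u_1,\widehat\theta_0$ and of $|\xi|$, I would Taylor-expand each amplitude and exponent to the order needed so that the remainder, after multiplication by $\chi_{\intt}$ and integration in $\xi$, is $o(\ml{B}_n(t))$. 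The profile $\varphi$ in \eqref{Profile} is precisely the inverse Fourier transform of the resulting principal part: $J_0 P_{u_1}$ from the leading amplitude of $\widehat u_1$, the gradient term $\langle\nabla J_0,M_{u_1}\rangle$ from the first moment expansion $\widehat{u_1}(\xi)\approx P_{u_1}+i\langle\xi,M_{u_1}\rangle$ (this is where the $L^{1,1}$ hypothesis enters, via the standard estimate $\|\widehat f(\xi)-P_f-i\langle\xi,M_f\rangle\|\lesssim |\xi|^2\|f\|_{L^{1,1}}$ together with the companion $\||\xi|^{-1}(\widehat f(\xi)-P_f)\|\lesssim\|f\|_{L^{1,1}}$ near $\xi=0$), the term $H(t,x)P_{u_1}$ from the next-order correction $|\xi|^4 t\,\mathrm{e}^{-\frac{1}{2\sigma}|\xi|^4 t}$-type contributions in both exponent and amplitude, $J_1 P_{u_0}$ from the $\widehat u_0$-mode, and $-\sigma^{-1}\Delta J_0 P_{\theta_0}=\ml{F}^{-1}(\sigma^{-1}\sin(|\xi|^2t)\mathrm{e}^{-\frac{1}{2\sigma}|\xi|^4t})P_{\theta_0}$ from the leading $\widehat\theta_0$-amplitude, which carries a $|\xi|^2$-factor and is therefore one order smaller.

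The remaining step is to show the error integrals are genuinely $o(\ml{B}_n(t))$, not merely $O(\ml{B}_n(t))$. For this I would use the scaling substitution $\eta=|\xi|^2\sqrt{t}$ (or $\eta=|\xi| t^{1/4}$): a term with symbol $|\xi|^{k}\,(\text{osc})\,\mathrm{e}^{-\frac{1}{2\sigma}|\xi|^4 t}$ contributes, in $L^2(\ml{Z}_{\intt})$, an amount $\simeq t^{\frac14-\frac{k}{4}-\frac{n}{8}}$, so each correction I have subtracted off gains at least one power of $|\xi|$ (hence a factor $t^{-1/4}$, or a logarithmic gain in the borderline dimensions) relative to $\ml{B}_n(t)$; the $L^{1,1}$-remainders carry an extra $|\xi|^{1}$ or $|\xi|^2$ which again improves the rate; and the upper cutoff $|\xi|\le\varepsilon_0$ turns the "$O$" from dominated convergence into "$o$" because on the rescaled variable the integrand is dominated and vanishes pointwise as $t\to\infty$ once the exact principal part has been removed. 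The main obstacle I anticipate is bookkeeping the \emph{second-order} interior expansion carefully enough: one must track the $|\xi|^4 t$ cross-terms that arise when Taylor-expanding $\mathrm{e}^{\lambda_{1,2}t}$ (the quadratic-in-time phenomenon that produces the explicit factor $t$ in $H$), and verify that all such terms of borderline size are captured by $H(t,x)P_{u_1}$ and not left in the remainder — in the low dimensions $n=1,2$ the coefficients $\ml{B}_n(t)=t^{1/4},\sqrt{\ln t}$ are the smallest, so the threshold for what counts as negligible is tightest there and the expansion must be pushed one term further than naive counting suggests.
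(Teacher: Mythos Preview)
Your plan matches the paper's proof: reduce to the interior zone via \eqref{PROP-LARGE-BDD}, expand the solution through the small-frequency asymptotics of the characteristic roots (Proposition \ref{PROP-CHARACTERISTIC-ROOTS} and the refined pointwise bounds \eqref{star-1}, \eqref{Est-02}, \eqref{Est-03}), and then peel off the moments of the data. The overall architecture is the same.

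There is, however, one genuine error. The ``standard estimate'' you invoke, $|\widehat f(\xi)-P_f-i\langle\xi,M_f\rangle|\lesssim |\xi|^2\|f\|_{L^{1,1}}$, is false: it would require $f\in L^{1,2}$. Under $L^{1,1}$ alone the second-order Taylor remainder of $\widehat f$ is only $o(|\xi|)$ as $\xi\to 0$, with no uniform rate, and this is exactly why the conclusion is a little-$o$ rather than a power gain. The paper does not use a Fourier-side expansion of the data at all; instead it works in physical space, writing $J_0(t,|D|)u_1-J_0 P_{u_1}-\langle\nabla J_0,M_{u_1}\rangle$ as a $y$-integral, splitting at $|y|=t^{\alpha_1}$ for small $\alpha_1>0$, applying the second-order kernel bound $|J_0(t,x-y)-J_0(t,x)-\langle y,\nabla J_0(t,x)\rangle|\lesssim|y|^2|\Delta J_0(t,x-\gamma_1 y)|$ on $|y|\le t^{\alpha_1}$, and using $\int_{|y|\ge t^{\alpha_1}}|y|\,|u_1(y)|\,\mathrm{d}y\to 0$ (from $u_1\in L^{1,1}$) on the tail. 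This yields $t^{2\alpha_1-n/8}\|u_1\|_{L^1}+o(\ml{B}_n(t))$ directly. Your rescaling-plus-dominated-convergence idea is the Fourier analogue of this splitting and can be made to work, but only after you replace the incorrect $|\xi|^2$-bound by the qualitative $o(|\xi|)$ statement and then carry out the limiting argument explicitly; as written, the $|\xi|^2$-claim short-circuits precisely the step where the $L^{1,1}$ hypothesis (rather than $L^{1,2}$) produces the $o$.
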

\begin{remark}
The lower-order term $+\sigma\theta$ enables the asymptotic profiles of the thermoelastic plate system \eqref{EQ-TEP-MASS} to change $\psi=\psi(t,x)$ in \eqref{PSI} of the model with $\sigma=0$ into $\varphi=\varphi(t,x)$ in \eqref{Profile} of the model with $\sigma>0$. The vital difference is the power of exponential function in the Fourier multipliers.
\end{remark}
\begin{remark}
Physically, the three-dimensional thermoelastic plate system is more important than other cases because of its applications in practice. If we simply consider $\varphi_{\mathrm{sim}}(t,x)=J_0(t,x)P_{u_1}$ to be the asymptotic profile, then it is not difficult to prove $\|u(t,\cdot)-\varphi_{\mathrm{sim}}(t,\cdot)\|_{L^2}=o(\ml{D}_n(t))$ as $t\gg1$, which does not decay in $\mb{R}^3$ as $t\to\infty$. For this purpose, we construct the higher-order profile $\varphi(t,x)$ instead of $\varphi_{\mathrm{sim}}(t,x)$.
\end{remark}

\section{Asymptotic behaviors of solutions}
\subsection{Reduction procedure and characteristic roots}	
To begin with this section, strongly motivated by the recent work \cite{Chen-Ikehata=2022-plate}, we employ the reduction procedure with respect to the vertical displacement. Due to $(\partial_t-\Delta+\sigma\ml{I})\theta=\Delta u_t$ equipping the identity operator $\ml{I}$, we act this diffusion operator on \eqref{EQ-TEP-MASS}$_1$ to arrive at the third-order (in time) evolution equation as follows:
\begin{align}\label{EQ-THIRD-ORDER}
	\begin{cases}
	u_{ttt}+(\sigma\ml{I}-\Delta)u_{tt}+2\Delta^2u_t+(\sigma\ml{I}-\Delta)\Delta^2u=0,\\
	(u,u_t,u_{tt})(0,x)=(u_0,u_1,u_2)(x),
	\end{cases}
\end{align}
with $(t,x)\in\mb{R}_+\times\mb{R}^n$, where the third data is determined by $u_2:=-\Delta^2u_0-\Delta\theta_0$. An application of the partial Fourier transform to the model \eqref{EQ-THIRD-ORDER} with respect to spatial variables yields
\begin{align}\label{EQ-FOURIER}
\begin{cases}
\widehat{u}_{ttt}+(\sigma+|\xi|^2)\widehat{u}_{tt}+2|\xi|^4\widehat{u}_t+(\sigma+|\xi|^2)|\xi|^4\widehat{u}=0,\\
(\widehat{u},\widehat{u}_t,\widehat{u}_{tt})(0,\xi)=(\widehat{u}_0,\widehat{u}_1,\widehat{u}_2)(\xi),
\end{cases}
\end{align}
with  $(t,\xi)\in\mb{R}_+\times\mb{R}^n$. Note that $\widehat{u}_2=-|\xi|^4\widehat{u}_0+|\xi|^2\widehat{\theta}_0$. Its characteristic equation is given by
\begin{align}\label{CUBIC}
	\lambda^3+(\sigma+|\xi|^2)\lambda^2+2|\xi|^4\lambda+(\sigma+|\xi|^2)|\xi|^4=0.
\end{align}
This cubic owns the strictly negative discriminant
\begin{align*}
	\triangle_{\mathrm{dis}}=-4\left[(\sigma+|\xi|^2)^2|\xi|^2-2\sqrt{2}|\xi|^6\right]^2-(16\sqrt{2}-13)(\sigma+|\xi|^2)^2|\xi|^8<0.
\end{align*}
For this reason, the last cubic \eqref{CUBIC} has one real root $\lambda_1$ and two complex conjugate roots $\lambda_{2/3}=\lambda_{\mathrm{R}}\pm i\lambda_{\mathrm{I}}$ carrying $\lambda_{\mathrm{R}},\lambda_{\mathrm{I}}\in\mb{R}$. Different from the homogeneous characteristic equation of the model with $\sigma=0$ in \cite{Chen-Ikehata=2022-plate}, we now cannot trivially stick down the explicit roots. 

According to the size of frequencies $|\xi|$, one may separate the discussion into three zones.  To be specific, we employ Taylor-like asymptotic expansions for $\xi\in\ml{Z}_{\intt}(\varepsilon_0)\cup\ml{Z}_{\extt}(N_0)$, and a contradiction argument combined with continuity of the roots for $\xi\in\ml{Z}_{\bdd}(\varepsilon_0,N_0)$. By proceeding lengthy but straightforward computations, we state the next behaviors for the roots.
\begin{prop}\label{PROP-CHARACTERISTIC-ROOTS}
The characteristic roots $\lambda_j=\lambda_j(|\xi|)$ with $j=1,2,3$ to the cubic equation \eqref{CUBIC} can be expanded by the next forms.
\begin{itemize}
	\item Concerning $\xi\in\ml{Z}_{\intt}(\varepsilon_0)$, three characteristic roots behave as
	\begin{align*}
	\lambda_1&=-\sigma+\frac{\sigma}{2-3\sigma}|\xi|^2+\ml{O}(|\xi|^4),\\
	\lambda_{2/3}&=\pm i|\xi|^2-\frac{1}{2\sigma}|\xi|^4+\left(\frac{1}{2\sigma^2}\pm\frac{(2\sigma+1)i}{8\sigma^2}\right)|\xi|^6+\ml{O}(|\xi|^8),
	\end{align*}
namely, $\lambda_{\mathrm{R}}=-\frac{1}{2\sigma}|\xi|^4+\frac{1}{2\sigma^2}|\xi|^6+\ml{O}(|\xi|^8)$ and $\lambda_{\mathrm{I}}=|\xi|^2+\frac{2\sigma+1}{8\sigma^2}|\xi|^6+\ml{O}(|\xi|^8)$.
	\item Concerning $\xi\in\ml{Z}_{\extt}(N_0)$, three characteristic roots behave as
	\begin{align*}
	\lambda_1&=-a_0|\xi|^2+\ml{O}(1),\\
	\lambda_{2/3}&=-a_1|\xi|^2\mp ia_2|\xi|^2+\ml{O}(1),
	\end{align*}
where $a_0:=\frac{1+\alpha_-}{3}\approx 0.57$, $a_1:=\frac{2-\alpha_-}{6}\approx 0.22$ and $a_2:=\frac{\sqrt{3}\alpha_+}{6}\approx 1.31$ carrying the constants
\begin{align*}
	\alpha_{\pm}:=\sqrt[3]{\frac{1}{2}(3\sqrt{69}+11)}\pm \sqrt[3]{\frac{1}{2}(3\sqrt{69}-11)},
\end{align*}
namely, $\lambda_{\mathrm{R}}=-a_1|\xi|^2+\ml{O}(1)$ and $\lambda_{\mathrm{I}}=-a_2|\xi|^2+\ml{O}(1)$.
	\item Concerning $\xi\in\ml{Z}_{\bdd}(\varepsilon_0,N_0)$, three characteristic roots fulfill $\Re\lambda_j<0$ for all $j=1,2,3$.
\end{itemize}
\end{prop}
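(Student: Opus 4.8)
\emph{Strategy.} Since the cubic \eqref{CUBIC} is a polynomial in $r:=|\xi|^2$ whose coefficients $c_2=\sigma+r$, $c_1=2r^2$, $c_0=(\sigma+r)r^2$ are themselves polynomials in $r$, and since the discriminant is strictly negative for $r>0$, the three roots split once and for all into one real branch $\lambda_1$ and a complex-conjugate pair $\lambda_{2/3}$; what remains is purely quantitative. I would split by the size of $|\xi|$. In the interior zone the cubic degenerates at $r=0$ (it becomes $\lambda^2(\lambda+\sigma)$), while in the exterior zone it degenerates after the natural parabolic rescaling $\lambda=r\mu$ as $r\to\infty$. In both cases the plan is to normalize the equation so that its limiting form has only \emph{simple} roots, apply the implicit function theorem to obtain analytic root branches in the relevant small parameter ($r$ near $0$, or $s:=r^{-1}$ near $0$), and then read off the expansion coefficients by substituting a power-series ansatz into \eqref{CUBIC} and matching like powers. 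In the bounded zone only the sign of $\Re\lambda_j$ matters, and I would get it from a Routh--Hurwitz computation.

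\emph{Interior zone.} Write $F(\lambda,r)$ for the left-hand side of \eqref{CUBIC}. The root $-\sigma$ at $r=0$ is simple and $\partial_\lambda F(-\sigma,0)=3\sigma^2-2\sigma^2=\sigma^2\neq0$, so $\lambda_1=\lambda_1(r)$ is analytic near $r=0$; inserting $\lambda_1=-\sigma+\sum_{k\geqslant1}a_kr^k$ into \eqref{CUBIC} and equating coefficients of $r^k$ determines the $a_k$ recursively, giving the stated expansion of $\lambda_1$ (and $\Re\lambda_1<0$ for small $r$). For the double root at $0$ I would substitute $\lambda=r\nu$ and divide \eqref{CUBIC} by $r^2$, obtaining $r\nu^3+(\sigma+r)\nu^2+2r\nu+(\sigma+r)=0$, whose $r\to0$ limit is $\sigma(\nu^2+1)=0$ with the two simple roots $\nu=\pm i$ (the third root of the full equation escapes to infinity, this being the $\lambda_1$-branch). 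By the implicit function theorem $\nu=\nu(r)$ is analytic near $r=0$, hence $\lambda_{2/3}=r\nu(r)$ admits an expansion in integer powers of $|\xi|^2$; plugging $\lambda_{2/3}=\pm i|\xi|^2+\sum_{k\geqslant2}b_k^{\pm}|\xi|^{2k}$ into \eqref{CUBIC} and matching orders up through $|\xi|^6$ yields $b_2^{\pm},b_3^{\pm}$, and separating real and imaginary parts produces the claimed $\lambda_{\mathrm R},\lambda_{\mathrm I}$.

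\emph{Exterior zone.} Set $\lambda=|\xi|^2\mu$ and $s:=|\xi|^{-2}$; dividing \eqref{CUBIC} by $|\xi|^6$ turns it into $\mu^3+(1+\sigma s)\mu^2+2\mu+(1+\sigma s)=0$, which is analytic in $s$ and whose $s\to0$ limit is the numerical cubic $\mu^3+\mu^2+2\mu+1=0$. This reduced cubic has strictly negative discriminant, hence three distinct roots; resolving it by Cardano (depress via $\mu\mapsto\mu-\tfrac13$, which produces $\mu^3+\tfrac53\mu+\tfrac{11}{27}=0$, whose Cardano radicals carry the $\sqrt{69}$) gives precisely the real root $-a_0$ and the conjugate pair $-a_1\pm ia_2$ with $a_0,a_1,a_2,\alpha_{\pm}$ as in the statement. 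As these roots are simple, the implicit function theorem provides analytic branches $\mu_j=\mu_j(s)$ near $s=0$, so $\lambda_j=|\xi|^2\mu_j(|\xi|^{-2})=\mu_j(0)|\xi|^2+\ml{O}(1)$ as $|\xi|\to\infty$, which is the asserted expansion (the $\ml{O}(1)$ remainder being $\mu_j'(0)+\ml{O}(|\xi|^{-2})$).

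\emph{Bounded zone and main obstacle.} For $\varepsilon_0\leqslant|\xi|\leqslant N_0$ I would apply the Routh--Hurwitz criterion to $\lambda^3+c_2\lambda^2+c_1\lambda+c_0$: one has $c_2=\sigma+|\xi|^2>0$, $c_0=(\sigma+|\xi|^2)|\xi|^4>0$ and $c_1c_2-c_0=(\sigma+|\xi|^2)|\xi|^4>0$, so all three roots satisfy $\Re\lambda_j<0$; in fact this argument works for every $\xi\neq0$, consistently with the interior and exterior expansions. Alternatively, substituting a purely imaginary $\lambda=i\tau$ with $\tau\in\mb{R}$ into \eqref{CUBIC} and separating real and imaginary parts forces $\tau^2=|\xi|^4$ from the real part and $\tau=0$ or $\tau^2=2|\xi|^4$ from the imaginary part, hence $\xi=0$ in either case; thus no root crosses the imaginary axis for $\xi\neq0$, and continuity in $r$ together with the known sign at small $r$ yields $\Re\lambda_j<0$ throughout the bounded zone. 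The genuinely delicate points, in my view, are the two degenerations: recognizing the rescalings ($\lambda=r\nu$ near $r=0$ and $\lambda=r\mu$ near $r=\infty$) that restore simplicity of the limiting roots so that the implicit function theorem applies and the expansions come out in integer powers of $|\xi|^2$. Once that is set up, obtaining the precise coefficients is careful but routine bookkeeping — carrying the interior substitution to order $|\xi|^6$ for $\lambda_{2/3}$, and carrying out the explicit Cardano resolution in the exterior zone.
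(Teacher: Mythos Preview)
Your proposal is correct and follows the same route the paper indicates: Taylor-like (WKB) expansions after appropriate rescalings in the interior and exterior zones, and a no-imaginary-axis-crossing plus continuity argument in the bounded zone. The paper does not spell out the computations beyond saying they are ``lengthy but straightforward''; your use of the implicit function theorem to justify analyticity of the root branches in $r=|\xi|^2$ (respectively $s=|\xi|^{-2}$) and your Routh--Hurwitz verification (which in fact yields $\Re\lambda_j<0$ for all $\xi\neq0$ in one stroke) are clean additions rather than departures from the paper's approach.
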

\begin{remark}
	In the small frequencies portion of Proposition \ref{PROP-CHARACTERISTIC-ROOTS}, we not only obtain pairwise distinct characteristic roots with negative real parts, but also investigate some higher-order terms, i.e. $|\xi|^2$-term in $\lambda_1$ and $|\xi|^6$-terms in $\lambda_{2/3}$ when $\xi\in\ml{Z}_{\intt}(\varepsilon_0)$. These non-vanishing higher-order terms will contribute to further expansions of solutions in the Fourier space.
\end{remark}

\subsection{Pointwise estimates and auxiliary functions in the Fourier space}
Throughout this subsection, we consider $\xi\in\ml{Z}_{\intt}(\varepsilon_0)\cup\ml{Z}_{\extt}(N_0)$ only because of exponential decay estimates for $\xi\in\ml{Z}_{\bdd}(\varepsilon_0,N_0)$. According to the representation of solution to the third-order model \eqref{EQ-FOURIER} and the expression of the last data $\widehat{u}_2=-|\xi|^4\widehat{u}_0+|\xi|^2\widehat{\theta}_0$, we may arrive at
\begin{align*}
	\widehat{u}
	&=\frac{(|\xi|^4-\lambda_{\mathrm{I}}^2-\lambda_{\mathrm{R}}^2)\widehat{u}_0+2\lambda_{\mathrm{R}}\widehat{u}_1-|\xi|^2\widehat{\theta}_0}{2\lambda_{\mathrm{R}}\lambda_1-\lambda_{\mathrm{I}}^2-\lambda_{\mathrm{R}}^2-\lambda_1^2}\mathrm{e}^{\lambda_1t}+\frac{(2\lambda_{\mathrm{R}}\lambda_1-\lambda_1^2-|\xi|^4)\widehat{u}_0-2\lambda_{\mathrm{R}}\widehat{u}_1+|\xi|^2\widehat{\theta}_0}{2\lambda_{\mathrm{R}}\lambda_1-\lambda_{\mathrm{I}}^2-\lambda_{\mathrm{R}}^2-\lambda_1^2}\cos(\lambda_{\mathrm{I}}t)\mathrm{e}^{\lambda_{\mathrm{R}}t}\\
	&\quad+\frac{[\lambda_1(\lambda_{\mathrm{R}}\lambda_1+\lambda_{\mathrm{I}}^2-\lambda_{\mathrm{R}}^2)+|\xi|^4(\lambda_{\mathrm{R}}-\lambda_1)]\widehat{u}_0+(\lambda_{\mathrm{R}}^2-\lambda_{\mathrm{I}}^2-\lambda_1^2)\widehat{u}_1-|\xi|^2(\lambda_{\mathrm{R}}-\lambda_1)\widehat{\theta}_0}{\lambda_{\mathrm{I}}(2\lambda_{\mathrm{R}}\lambda_1-\lambda_{\mathrm{I}}^2-\lambda_{\mathrm{R}}^2-\lambda_1^2)}\sin(\lambda_{\mathrm{I}}t)\mathrm{e}^{\lambda_{\mathrm{R}}t},
\end{align*}
whose idea should be traced back to \cite[Subsection 2.3]{Chen-Takeda=2022-JMGT}.

To analyze asymptotic behaviors of solution, we firstly set $\widehat{g}_j=\widehat{g}_j(t,\xi)$ with $j=1,2$ such that
\begin{align*}
	\widehat{g}_1&:=\frac{-\lambda_1^2\sin(\lambda_{\mathrm{I}}t)\mathrm{e}^{\lambda_{\mathrm{R}}t}}{\lambda_{\mathrm{I}}(2\lambda_{\mathrm{R}}\lambda_1-\lambda_{\mathrm{I}}^2-\lambda_{\mathrm{R}}^2-\lambda_1^2)}\widehat{u}_1,\\
	\widehat{g}_2&:=\frac{-\lambda_1^2\cos(\lambda_{\mathrm{I}}t)\mathrm{e}^{\lambda_{\mathrm{R}}t}}{2\lambda_{\mathrm{R}}\lambda_1-\lambda_{\mathrm{I}}^2-\lambda_{\mathrm{R}}^2-\lambda_1^2}\widehat{u}_0+\frac{|\xi|^2\lambda_1\sin(\lambda_{\mathrm{I}}t)\mathrm{e}^{\lambda_{\mathrm{R}}t}}{\lambda_{\mathrm{I}}(2\lambda_{\mathrm{R}}\lambda_1-\lambda_{\mathrm{I}}^2-\lambda_{\mathrm{R}}^2-\lambda_1^2)}\widehat{\theta}_0,
\end{align*}
whose origins are extractions of leading terms of $\widehat{u}$ for small frequencies. Applying the asymptotic expansions derived in Proposition \ref{PROP-CHARACTERISTIC-ROOTS}, the next error estimate holds:
\begin{align}\label{star-1}
\chi_{\intt}(\xi)|\widehat{u}-\widehat{g}_1-\widehat{g}_2|&\lesssim\chi_{\intt}(\xi)\left(\mathrm{e}^{-ct}+|\cos(|\xi|^2t)|\mathrm{e}^{-c|\xi|^4t}\right)\left(|\xi|^6|\widehat{u}_0|+|\xi|^4|\widehat{u}_1|+|\xi|^2|\widehat{\theta}_0|\right)\notag\\
&\quad+\chi_{\intt}(\xi)|\sin(|\xi|^2t)|\mathrm{e}^{-c|\xi|^4t}\left(|\xi|^2|\widehat{u}_0|+|\xi|^2|\widehat{u}_1|+|\xi|^4|\widehat{\theta}_0|\right)\notag\\
&\lesssim \chi_{\intt}(\xi)|\xi|^2\mathrm{e}^{-c|\xi|^4t}\left(|\widehat{u}_0|+|\widehat{u}_1|+|\widehat{\theta}_0|\right), 
\end{align}
where we took advantages of  $|\xi|^4-\lambda_{\mathrm{I}}^2=\ml{O}(|\xi|^8)$ as well as $2\lambda_{\mathrm{R}}\lambda_1-|\xi|^4=\ml{O}(|\xi|^6)$ for $\xi\in\ml{Z}_{\intt}(\varepsilon_0)$. Due to the estimates that
\begin{align*}
\chi_{\intt}(\xi)|\widehat{g}_1|&\lesssim\chi_{\intt}(\xi)\frac{|\sin(|\xi|^2t)|}{|\xi|^2}\mathrm{e}^{-c|\xi|^4t}|\widehat{u}_1|,\\
\chi_{\intt}(\xi)|\widehat{g}_2|&\lesssim\chi_{\intt}(\xi)\mathrm{e}^{-c|\xi|^4t}\left(|\widehat{u}_0|+|\widehat{\theta}_0|\right),
\end{align*}
we use the triangle inequality to obtain
\begin{align}
\chi_{\intt}(\xi)|\widehat{u}-\widehat{g}_1|&\lesssim \chi_{\intt}(\xi)\mathrm{e}^{-c|\xi|^4t}\left(|\widehat{u}_0|+|\xi|^2|\widehat{u}_1|+|\widehat{\theta}_0|\right),\label{Est-05}\\
\chi_{\intt}(\xi)|\widehat{u}|&\lesssim \chi_{\intt}(\xi)\mathrm{e}^{-c|\xi|^4t}\left(|\widehat{u}_0|+\frac{|\sin(|\xi|^2t)|}{|\xi|^2}|\widehat{u}_1|+|\widehat{\theta}_0|\right).\label{Est-04}
\end{align}

Let us introduce some approximations $\widehat{J}_j=\widehat{J}_j(t,|\xi|)$ with $j=0,1$ such that
\begin{align*}
\widehat{J}_0:=\frac{\sin(|\xi|^2t)}{|\xi|^2}\mathrm{e}^{-\frac{1}{2\sigma}|\xi|^4t}\ \ \mbox{and}\ \ \widehat{J}_1:=\cos(|\xi|^2t)\mathrm{e}^{-\frac{1}{2\sigma}|\xi|^4t}.
\end{align*}
They are the Fourier transformations of higher-order diffusion-plates. Then, we are able to claim the refined estimates by subtracting some approximated functions.
\begin{prop}\label{PROP-APPROXIMATION}
Concerning $\xi\in\ml{Z}_{\intt}(\varepsilon_0)$, the following refined estimates hold:
\begin{align}
\chi_{\intt}(\xi)|\widehat{g}_1-\widehat{J}_0\widehat{u}_1|&\lesssim\chi_{\intt}(\xi)\mathrm{e}^{-c|\xi|^4t}|\widehat{u}_1|,\label{Est-01}\\
\chi_{\intt}(\xi)|\widehat{g}_1-(\widehat{J}_0+\widehat{H}_0+\widehat{H}_1)\widehat{u}_1|&\lesssim\chi_{\intt}(\xi)|\xi|^2\mathrm{e}^{-c|\xi|^4t}|\widehat{u}_1|,\label{Est-02}\\
\chi_{\intt}(\xi)|\widehat{g}_2-\widehat{J}_1\widehat{u}_0-\sigma^{-1}|\xi|^2\widehat{J}_0\widehat{\theta}_0|&\lesssim\chi_{\intt}(\xi)|\xi|^2\mathrm{e}^{-c|\xi|^4t}\left(|\widehat{u}_0|+|\widehat{\theta}_0|\right),\label{Est-03}
\end{align}
where the auxiliary functions  $\widehat{H}_j=\widehat{H}_j(t,|\xi|)$ with $j=0,1$ are defined by
\begin{align*}
\widehat{H}_0:=\frac{2\sigma+1}{8\sigma^2}|\xi|^4t\cos(|\xi|^2t)\mathrm{e}^{-\frac{1}{2\sigma}|\xi|^4t}\ \ \mbox{and}\ \ 
\widehat{H}_1:=\frac{1}{2\sigma^2}|\xi|^4t\sin(|\xi|^2t)\mathrm{e}^{-\frac{1}{2\sigma}|\xi|^4t}.
\end{align*}
\end{prop}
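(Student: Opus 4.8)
The plan is to prove the three estimates \eqref{Est-01}, \eqref{Est-02} and \eqref{Est-03} by carefully substituting the small-frequency asymptotic expansions of the characteristic roots from Proposition \ref{PROP-CHARACTERISTIC-ROOTS} into the explicit expressions of $\widehat{g}_1$ and $\widehat{g}_2$, and then comparing them term by term against the model Fourier multipliers $\widehat{J}_0$, $\widehat{J}_1$, $\widehat{H}_0$, $\widehat{H}_1$. The guiding principle is that the exponential factor $\mathrm{e}^{\lambda_{\mathrm{R}}t}$ with $\lambda_{\mathrm{R}}=-\frac{1}{2\sigma}|\xi|^4+\frac{1}{2\sigma^2}|\xi|^6+\ml{O}(|\xi|^8)$ differs from $\mathrm{e}^{-\frac{1}{2\sigma}|\xi|^4t}$ by a factor $\mathrm{e}^{\frac{1}{2\sigma^2}|\xi|^6t+\ml{O}(|\xi|^8t)}$, and likewise $\cos(\lambda_{\mathrm{I}}t)$, $\sin(\lambda_{\mathrm{I}}t)$ differ from $\cos(|\xi|^2t)$, $\sin(|\xi|^2t)$ through the phase shift $\lambda_{\mathrm{I}}-|\xi|^2=\frac{2\sigma+1}{8\sigma^2}|\xi|^6+\ml{O}(|\xi|^8)$. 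The standard trick is that, on $\ml{Z}_{\intt}(\varepsilon_0)$, one has $|\mathrm{e}^{-z}-1|\lesssim|z|$ for bounded $z$ and, crucially, $|\xi|^{2k}t\,\mathrm{e}^{-c|\xi|^4t}\lesssim 1$ for any $k\geqslant 2$; so a discrepancy of order $|\xi|^6t$ in an exponent, or of order $|\xi|^6t$ coming from a Taylor expansion of the trigonometric phase, is absorbed by the Gaussian-in-$|\xi|^4$ factor and merely costs one power $|\xi|^2$, consistent with the right-hand sides of \eqref{Est-02}, \eqref{Est-03}.

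First I would treat \eqref{Est-01}: writing $\widehat{g}_1=\dfrac{-\lambda_1^2\sin(\lambda_{\mathrm{I}}t)\mathrm{e}^{\lambda_{\mathrm{R}}t}}{\lambda_{\mathrm{I}}(2\lambda_{\mathrm{R}}\lambda_1-\lambda_{\mathrm{I}}^2-\lambda_{\mathrm{R}}^2-\lambda_1^2)}\widehat{u}_1$, I substitute $\lambda_1=-\sigma+\ml{O}(|\xi|^2)$ and $\lambda_{\mathrm{I}}=|\xi|^2+\ml{O}(|\xi|^6)$, $\lambda_{\mathrm{R}}=\ml{O}(|\xi|^4)$. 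The denominator $2\lambda_{\mathrm{R}}\lambda_1-\lambda_{\mathrm{I}}^2-\lambda_{\mathrm{R}}^2-\lambda_1^2 = -\sigma^2+\ml{O}(|\xi|^2)$ is bounded away from zero, and the prefactor $-\lambda_1^2/(\lambda_{\mathrm{I}}\cdot(\text{denom}))$ equals $\dfrac{1}{|\xi|^2}+\ml{O}(1)$. Combined with the bound $|\mathrm{e}^{\lambda_{\mathrm{R}}t}|\lesssim\mathrm{e}^{-c|\xi|^4t}$ and $|\sin(\lambda_{\mathrm{I}}t)|\leqslant 1$, this already yields $|\widehat{g}_1|\lesssim(|\xi|^{-2}+1)\mathrm{e}^{-c|\xi|^4t}|\widehat{u}_1|$; subtracting $\widehat{J}_0\widehat{u}_1=\frac{\sin(|\xi|^2t)}{|\xi|^2}\mathrm{e}^{-\frac{1}{2\sigma}|\xi|^4t}\widehat{u}_1$, the leading $|\xi|^{-2}$ singularities must cancel, and the difference of the $\mathrm{e}^{-\frac{1}{2\sigma}|\xi|^4t}$-scaled factors together with the $\ml{O}(1)$ remainder gives \eqref{Est-01} with a gain that makes the right-hand side $\mathrm{e}^{-c|\xi|^4t}|\widehat{u}_1|$ (no extra $|\xi|^2$, since the cancellation of the singular part is only to leading order). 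For \eqref{Est-02} I keep one more order in every expansion: the $|\xi|^6$-corrections in $\lambda_{\mathrm{R}}$ and $\lambda_{\mathrm{I}}$ produce exactly, via $\mathrm{e}^{\frac{1}{2\sigma^2}|\xi|^6t}\approx 1+\frac{1}{2\sigma^2}|\xi|^6t$ and $\sin((|\xi|^2+\frac{2\sigma+1}{8\sigma^2}|\xi|^6)t)\approx\sin(|\xi|^2t)+\frac{2\sigma+1}{8\sigma^2}|\xi|^6t\cos(|\xi|^2t)$, the correction terms $\widehat{H}_1$ and $\widehat{H}_0$ (after dividing by $|\xi|^2$), and everything else is $\ml{O}(|\xi|^2\mathrm{e}^{-c|\xi|^4t})|\widehat{u}_1|$. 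For \eqref{Est-03} the same bookkeeping is applied to $\widehat{g}_2$: the $\widehat{u}_0$-part contributes $\widehat{J}_1\widehat{u}_0$ at leading order (the prefactor $-\lambda_1^2/(\text{denom})\to 1$), and the $\widehat{\theta}_0$-part, whose prefactor is $\dfrac{|\xi|^2\lambda_1}{\lambda_{\mathrm{I}}(\text{denom})}=\dfrac{|\xi|^2(-\sigma)}{|\xi|^2(-\sigma^2)}+\ml{O}(|\xi|^2)=\sigma^{-1}+\ml{O}(|\xi|^2)$, contributes $\sigma^{-1}|\xi|^2\widehat{J}_0\widehat{\theta}_0$ at leading order, with all remainders of size $|\xi|^2\mathrm{e}^{-c|\xi|^4t}(|\widehat{u}_0|+|\widehat{\theta}_0|)$.

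The main obstacle I anticipate is the accounting of the $|\xi|^{-2}$ singularity in \eqref{Est-01} and \eqref{Est-02}: since $\widehat{g}_1$ carries a genuine $|\xi|^{-2}$ prefactor, one cannot simply bound differences of the trigonometric/exponential factors naively — one must verify that the singular parts of $\widehat{g}_1$ and of $\widehat{J}_0\widehat{u}_1$ (resp. $(\widehat{J}_0+\widehat{H}_0+\widehat{H}_1)\widehat{u}_1$) agree to the required order, so that after cancellation the surviving terms are genuinely $\ml{O}(\mathrm{e}^{-c|\xi|^4t})$ (resp. $\ml{O}(|\xi|^2\mathrm{e}^{-c|\xi|^4t})$). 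Concretely this means I must expand the whole coefficient $\dfrac{-\lambda_1^2}{\lambda_{\mathrm{I}}(2\lambda_{\mathrm{R}}\lambda_1-\lambda_{\mathrm{I}}^2-\lambda_{\mathrm{R}}^2-\lambda_1^2)}$ as $|\xi|^{-2}+c_0+c_1|\xi|^2+\ml{O}(|\xi|^4)$ and check that the $|\xi|^{-2}$ and $|\xi|^0$ coefficients multiply precisely the phase/amplitude structure that $\widehat J_0$, $\widehat H_0$, $\widehat H_1$ reproduce; the inequalities $|\sin(|\xi|^2 t)|\lesssim |\xi|^2 t$ and $|\xi|^{2k}t\,\mathrm{e}^{-c|\xi|^4 t}\lesssim |\xi|^{2k-4}$ for $k\geqslant 2$ on $\ml{Z}_{\intt}(\varepsilon_0)$ will be the workhorse for turning "$t$ against exponential" into clean powers of $|\xi|$. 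The computation is lengthy but, once the expansions of Proposition \ref{PROP-CHARACTERISTIC-ROOTS} are plugged in, entirely mechanical; no new idea beyond careful Taylor expansion and the elementary inequality $|\mathrm{e}^{z}-1-z|\lesssim|z|^2$ for $|z|\leqslant 1$ is needed.
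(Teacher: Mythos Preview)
Your proposal is correct and follows essentially the same approach as the paper: substitute the small-frequency expansions from Proposition \ref{PROP-CHARACTERISTIC-ROOTS}, Taylor-expand $\sin(\lambda_{\mathrm{I}}t)$ and $\mathrm{e}^{\lambda_{\mathrm{R}}t}$ around $|\xi|^2t$ and $-\frac{1}{2\sigma}|\xi|^4t$, and absorb all remainders using $|\xi|^{2k}t\,\mathrm{e}^{-c|\xi|^4t}\lesssim|\xi|^{2k-4}$. The paper organizes this slightly differently---it proves \eqref{Est-02} first via an explicit four-term splitting $\widehat{g}_1-(\widehat{J}_0+\widehat{H}_0+\widehat{H}_1)\widehat{u}_1=\sum_{j=0}^3\widehat{E}_j\widehat{u}_1$ (replace the rational prefactor by $|\xi|^{-2}$, then the sine, then the exponential, plus a cross term) and only afterwards deduces \eqref{Est-01} by the triangle inequality with $|\widehat{H}_0|+|\widehat{H}_1|\lesssim\mathrm{e}^{-c|\xi|^4t}$---and in particular it observes directly that the rational prefactor equals $|\xi|^{-2}+\ml{O}(|\xi|^2)$ (your anticipated $c_0$ is in fact zero, since $\lambda_{\mathrm{I}}-|\xi|^2=\ml{O}(|\xi|^6)$ and $2\lambda_{\mathrm{R}}\lambda_1-\lambda_{\mathrm{I}}^2-\lambda_{\mathrm{R}}^2=\ml{O}(|\xi|^6)$), which shortcuts the bookkeeping you flag as the main obstacle.
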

\begin{proof}
A direct subtraction associated with suitable decompositions implies
\begin{align*}
\widehat{g}_1-(\widehat{J}_0+\widehat{H}_0+\widehat{H}_1)\widehat{u}_1=\sum\limits_{j=0}^3\widehat{E}_j\widehat{u}_1,
\end{align*}
where the error terms are
\begin{align*}
\widehat{E}_0&:=\frac{-\lambda_1^2\sin(\lambda_{\mathrm{I}}t)\mathrm{e}^{\lambda_{\mathrm{R}}t}}{\lambda_{\mathrm{I}}(2\lambda_{\mathrm{R}}\lambda_1-\lambda_{\mathrm{I}}^2-\lambda_{\mathrm{R}}^2-\lambda_1^2)}-\frac{1}{|\xi|^2}\sin(\lambda_{\mathrm{I}}t)\mathrm{e}^{\lambda_{\mathrm{R}}t},\\
\widehat{E}_1&:=\frac{1}{|\xi|^2}\left(\sin(\lambda_{\mathrm{I}}t)-\sin(|\xi|^2t)-\frac{2\sigma+1}{8\sigma^2}|\xi|^6t\cos(|\xi|^2t)\right)\mathrm{e}^{\lambda_{\mathrm{R}}t},\\
\widehat{E}_2&:=\frac{1}{|\xi|^2}\sin(|\xi|^2t)\left(\mathrm{e}^{\lambda_{\mathrm{R}}t}-\mathrm{e}^{-\frac{1}{2\sigma}|\xi|^4t}-\frac{1}{2\sigma^2}|\xi|^6t\mathrm{e}^{-\frac{1}{2\sigma}|\xi|^4t}\right),\\
\widehat{E}_3&:=\frac{2\sigma+1}{8\sigma^2}|\xi|^4t\cos(|\xi|^2t)\left(\mathrm{e}^{\lambda_{\mathrm{R}}t}-\mathrm{e}^{-\frac{1}{2\sigma}|\xi|^4t}\right).
\end{align*}
For one thing, the asymptotic behaviors stated in Proposition \ref{PROP-CHARACTERISTIC-ROOTS} leads to
\begin{align*}
\chi_{\intt}(\xi)|\widehat{E}_0|&=\chi_{\intt}(\xi)\left|\frac{\lambda_1^2(\lambda_{\mathrm{I}}-|\xi|^2)-\lambda_{\mathrm{I}}(2\lambda_{\mathrm{R}}\lambda_1-\lambda_{\mathrm{I}}^2-\lambda_{\mathrm{R}}^2)}{\lambda_{\mathrm{I}}(2\lambda_{\mathrm{R}}\lambda_1-\lambda_{\mathrm{I}}^2-\lambda_{\mathrm{R}}^2-\lambda_1^2)|\xi|^2} \right||\sin(\lambda_{\mathrm{I}}t)|\mathrm{e}^{\lambda_{\mathrm{R}}t}\\
&\lesssim\chi_{\intt}(\xi)|\xi|^2\mathrm{e}^{-c|\xi|^4t},
\end{align*}
because of $\lambda_{\mathrm{I}}-|\xi|^2=\ml{O}(|\xi|^6)$ for $\xi\in\ml{Z}_{\intt}(\varepsilon_0)$. For another, with the help of Taylor's expansions as $|\xi|\ll 1$, we notice
\begin{align*}
	\sin(\lambda_{\mathrm{I}}t)&=\sin(|\xi|^2t)+\frac{2\sigma+1}{8\sigma^2}|\xi|^6t\cos(|\xi|^2t)+\ml{O}(|\xi|^{12})t^2,\\
	\mathrm{e}^{\lambda_{\mathrm{R}}t}&=\mathrm{e}^{-\frac{1}{2\sigma}|\xi|^4t}+\frac{1}{2\sigma^2}|\xi|^6t\mathrm{e}^{-\frac{1}{2\sigma}|\xi|^4t}+\ml{O}(|\xi|^{12})t^2\mathrm{e}^{-\frac{1}{2\sigma}|\xi|^4t}.
\end{align*}
As a consequence,
\begin{align*}
\chi_{\intt}(\xi)\left(|\widehat{E}_1|+|\widehat{E}_2|+|\widehat{E}_3|\right)\lesssim\chi_{\intt}(\xi)|\xi|^{10}t^2\mathrm{e}^{-c|\xi|^4t}\lesssim\chi_{\intt}(\xi)|\xi|^2\mathrm{e}^{-c|\xi|^4t}.
\end{align*}
Summarizing the last estimates, we conclude
\begin{align*}
\chi_{\intt}(\xi)|\widehat{g}_1-(\widehat{J}_0+\widehat{H}_0+\widehat{H}_1)\widehat{u}_1|\lesssim \chi_{\intt}(\xi)\sum\limits_{j=0}^3|\widehat{E}_j||\widehat{u}_1|\lesssim\chi_{\intt}(\xi)|\xi|^2\mathrm{e}^{-c|\xi|^4t}|\widehat{u}_1|,
\end{align*}
which completes the proof of \eqref{Est-02}. Additionally, the use of the triangle inequality associated with
\begin{align*}
	\chi_{\intt}(\xi)\left(|\widehat{H}_0|+|\widehat{H}_1|\right)\lesssim\chi_{\intt}(\xi)\mathrm{e}^{-c|\xi|^4t},
\end{align*}
we claim the desired estimate \eqref{Est-01}. By a similar approach to the above, we may prove \eqref{Est-03} easily.
\end{proof}

Eventually, we propose pointwise estimates under bounded and large frequencies such that
	\begin{align}\label{PROP-LARGE-BDD}
	\big(\chi_{\bdd}(\xi)+\chi_{\extt}(\xi)\big)|\hat{u}|\lesssim\big(\chi_{\bdd}(\xi)+\chi_{\extt}(\xi)\big)\mathrm{e}^{-ct}\left(|\widehat{u}_0|+\langle\xi\rangle^{-2}|\widehat{u}_1|+\langle\xi\rangle^{-2}|\widehat{\theta}_0|\right),
\end{align}
 whose proof is standard basing on Proposition \ref{PROP-CHARACTERISTIC-ROOTS}. One may see the second estimate in \cite[Proposition 3.1]{Chen-Ikehata=2022-plate}. They will not influence on large-time behaviors of solutions since exponential decays.

\subsection{Proof of Theorem \ref{THM-OPTIMAL-DECAY}}
Let us firstly recall the optimal estimates proposed in \cite[Propositions 3.1-3.3]{Ikehata=2021} as follows:
\begin{align}\label{Optimal-Kernel}
\left\|\frac{\sin(|\xi|^2t)}{|\xi|^2}\mathrm{e}^{-c|\xi|^4t}\right\|_{L^2}\simeq\ml{D}_n(t) \ \ \mbox{and}\ \ 
\left\||\xi|^k\mathrm{e}^{-c|\xi|^4t}\right\|_{L^2}\simeq t^{-\frac{k}{4}-\frac{n}{8}},
\end{align}
for any $k\in\mb{N}_0$ and $t\gg1$, where $\ml{D}_n(t)$ was introduced in \eqref{Decay-fun}. Then, by applying H\"older's inequality and the Hausdorff-Young inequality, we can obtain from \eqref{Est-04} and \eqref{PROP-LARGE-BDD} that
\begin{align*}
\|u(t,\cdot)\|_{L^2}
&\lesssim\left\|\chi_{\intt}(\xi)\mathrm{e}^{-c|\xi|^4t}\right\|_{L^2}\left(\|u_0\|_{L^1}+\|\theta_0\|_{L^1}\right)+\left\|\chi_{\intt}(\xi)\frac{\sin(|\xi|^2t)}{|\xi|^2}\mathrm{e}^{-c|\xi|^4t}\right\|_{L^2}\|u_1\|_{L^1}\\
&\quad+\mathrm{e}^{-ct}\left(\|u_0\|_{L^2}+\|u_1\|_{L^2}+\|\theta_0\|_{L^2}\right)\\
&\lesssim t^{-\frac{n}{8}}\left(\|u_0\|_{L^2\cap L^1}+\|\theta_0\|_{L^2\cap L^1}\right)+\ml{D}_n(t)\|u_1\|_{L^2\cap L^1}
\end{align*}
for large-time $t\gg1$, which gets the upper bound estimate.

 For the lower one, from \eqref{Est-05} and \eqref{Est-01}, we actually know
\begin{align*}
\|u(t,\cdot)-J_0(t,|D|)u_1(\cdot)\|_{L^2}&\lesssim\big\|\chi_{\intt}(\xi)\big(\widehat{u}(t,\xi)-\widehat{g}_1(t,\xi)\big)\big\|_{L^2}+\big\|\chi_{\intt}(\xi)\big(\widehat{g}_1(t,\xi)-\widehat{J}_0(t,|\xi|)\widehat{u}_1(\xi)\big)\big\|_{L^2}\\
&\quad+\big\|\big(1-\chi_{\intt}(\xi)\big)\widehat{u}(t,\xi)\big\|_{L^2}+\big\|\big(1-\chi_{\intt}(\xi)\big)\widehat{J}_0(t,|\xi|)\widehat{u}_1(\xi)\big\|_{L^2}\\
&\lesssim t^{-\frac{n}{8}}\|(u_0,u_1,\theta_0)\|_{(L^2\cap L^1)^3}
\end{align*}
for $t\gg1$. Here, \eqref{Optimal-Kernel} has been used.  For another thing, the decomposition
\begin{align*}
J_0(t,|D|)u_1(x)-J_0(t,x)P_{u_1}&=\int_{|y|\leqslant t^{\alpha_0}}\big(J_0(t,x-y)-J_0(t,x)\big)u_1(y)\mathrm{d}y\\
&\quad+\int_{|y|\geqslant t^{\alpha_0}}J_0(t,x-y)u_1(y)\mathrm{d}y-J_0(t,x)\int_{|y|\geqslant t^{\alpha_0}}u_1(y)\mathrm{d}y
\end{align*}
with a sufficiently small constant $\alpha_0>0$, as well as Taylor's expansion
\begin{align}\label{star-2}
|J_0(t,x-y)-J_0(t,x)|\lesssim|y|\,|\nabla J_0(t,x-\gamma_0y)|
\end{align}
with a constant $\gamma_0\in(0,1)$, show
\begin{align}\label{star-3}
\|J_0(t,|D|)u_1(\cdot)-J_0(t,\cdot)P_{u_1}\|_{L^2}&\lesssim t^{\alpha_0}\left\|\frac{\sin(|\xi|^2t)}{|\xi|}\mathrm{e}^{-c|\xi|^4t}\right\|_{L^2}\|u_1\|_{L^1}+\|\widehat{J}_0(t,|\xi|)\|_{L^2}\int_{|y|\geqslant t^{\alpha_0}}|u_1(y)|\mathrm{d}y\notag\\
&\lesssim t^{\alpha_0-\frac{1}{4}}\ml{D}_n(t)\|u_1\|_{L^1}+o\big(\ml{D}_n(t)\big)
\end{align}
for $t\gg1$, where we considered
\begin{align*}
\left\|\frac{\sin(|\xi|^2t)}{|\xi|}\mathrm{e}^{-c|\xi|^4t}\right\|_{L^2}\lesssim \left\||\xi|\mathrm{e}^{-\frac{c}{2}|\xi|^4t}\right\|_{L^{\infty}}\left\|\frac{\sin(|\xi|^2t)}{|\xi|^2}\mathrm{e}^{-\frac{c}{2}|\xi|^4t}\right\|_{L^2}\lesssim t^{-\frac{1}{4}}\ml{D}_n(t),
\end{align*}
and the fact that $u_1\in L^1$ associates with
\begin{align*}
\lim\limits_{t\to\infty}\int_{|y|\geqslant t^{\alpha_0}}|u_1(y)|\mathrm{d}y=0.
\end{align*}
In conclusion, an application of Minkowski's inequality immediately implies
\begin{align*}
\|u(t,\cdot)\|_{L^2}&\geqslant\|J_0(t,\cdot)\|_{L^2}|P_{u_1}|-\|u(t,\cdot)-J_0(t,\cdot)P_{u_1}\|_{L^2}\\
&\gtrsim\ml{D}_n(t)|P_{u_1}|-\|u(t,\cdot)-J_0(t,|D|)u_1(\cdot)\|_{L^2}-\|J_0(t,|D|)u_1(\cdot)-J_0(t,\cdot)P_{u_1}\|_{L^2}\\
&\gtrsim\ml{D}_n(t)|P_{u_1}|-t^{-\frac{n}{8}}\|(u_0,u_1,\theta_0)\|_{(L^2\cap L^1)^3}-t^{\alpha_0-\frac{1}{4}}\ml{D}_n(t)\|u_1\|_{L^1}-o\big(\ml{D}_n(t)\big)\\
&\gtrsim\ml{D}_n(t)|P_{u_1}|
\end{align*}
for $t\gg1$ by taking $\alpha_0$ sufficiently small. So, our proof of Theorem \ref{THM-OPTIMAL-DECAY} is complete.

\subsection{Proof of Theorem \ref{THM-OPTIMAL-LEADING}}
Recalling the profile defined in \eqref{Profile}, we indeed may find the next decomposition:
\begin{align*}
u(t,x)-\varphi(t,x)=\ml{E}_0(t,x)+\ml{E}_1(t,x)+\ml{E}_2(t,x),
\end{align*}
with the components
\begin{align*}
\ml{E}_0(t,x)&:=u(t,x)-\big(J_0(t,|D|)+H(t,|D|)\big)u_1(x)-J_1(t,|D|)u_0(x)+\sigma^{-1}\Delta J_0(t,|D|)\theta_0(x),\\
\ml{E}_1(t,x)&:=J_0(t,|D|)u_1(x)-J_0(t,x)P_{u_1}-\left\langle\nabla J_0(t,x), M_{u_1}\right\rangle,\\
\ml{E}_2(t,x)&:=\big(H(t,|D|)u_1(x)-H(t,x)P_{u_1}\big)+\big(J_1(t,|D|)u_0(x)-J_1(t,x)P_{u_0}\big)\\
&\ \quad-\sigma^{-1}\big(\Delta J_0(t,|D|)\theta_0(x)-\Delta J_0(t,x)P_{\theta_0}\big).
\end{align*}
First of all, we employ the derived estimates \eqref{star-1}, \eqref{Est-02} as well as \eqref{Est-03} to get
\begin{align*}
\|\ml{E}_0(t,\cdot)\|_{L^2}&\lesssim\left\|\chi_{\intt}(\xi)|\xi|^2\mathrm{e}^{-c|\xi|^4t}\left(|\widehat{u}_0(\xi)|+|\widehat{u}_1(\xi)|+|\widehat{\theta}_0(\xi)|\right)\right\|_{L^2}+\left\|\big(1-\chi_{\intt}(\xi)\big)\widehat{\ml{E}}_0(t,\xi)\right\|_{L^2}\\
&\lesssim t^{-\frac{1}{2}-\frac{n}{8}}\|(u_0,u_1,\theta_0)\|_{(L^2\cap L^1)^3}
\end{align*}
for $t\gg1$. To treat the second error term, we re-formulate it by
\begin{align*}
\ml{E}_1(t,x)&=\int_{|y|\leqslant t^{\alpha_1}}\big(J_0(t,x-y)-J_0(t,x)-\left\langle y,\nabla J_0(t,x)\right\rangle\big)u_1(y)\mathrm{d}y\\
&\quad+\int_{|y|\geqslant t^{\alpha_1}}\big(J_0(t,x-y)-J_0(t,x)\big)u_1(y)\mathrm{d}y+\int_{|y|\geqslant t^{\alpha_1}}\left\langle-y,\nabla J_0(t,x)\right\rangle u_1(y)\mathrm{d}y,
\end{align*}
equipping a sufficiently small constant $\alpha_1>0$. Again from \eqref{star-2} and 
\begin{align*}
|J_0(t,x-y)-J_0(t,x)-\left\langle y,\nabla J_0(t,x)\right\rangle|\lesssim |y|^2|\Delta J_0(t,x-\gamma_1y)|
\end{align*}
with $\gamma_1\in(0,1)$, one derives
\begin{align*}
\|\ml{E}_1(t,\cdot)\|_{L^2}&\lesssim t^{2\alpha_1}\|\,|\xi|^2\widehat{J}_0(t,|\xi|)\|_{L^2}\|u_1\|_{L^1}+\|\,|\xi|\widehat{J}_0(t,|\xi|)\|_{L^2}\int_{|y|\geqslant t^{\alpha_1}}|y||u_1(y)|\mathrm{d}y\\
&\lesssim t^{2\alpha_1-\frac{n}{8}}\|u_1\|_{L^1}+o\big(\ml{B}_n(t)\big)
\end{align*}
for $t\gg1$, where we used \cite[Lemma 4.5]{Ikehata=2021} and the weighted assumption $u_1\in L^{1,1}$. Afterward, similarly to \eqref{star-3}, we may calculate
\begin{align*}
\|\ml{E}_2(t,\cdot)\|_{L^2}=o(t^{-\frac{n}{8}})
\end{align*}
for $t\gg1$ also, which can be guaranteed by the hypothesis $u_0,u_1,\theta_0\in L^{1,1}$. Summing up all obtained estimates from the previous statements, we assert that
\begin{align*}
\|u(t,\cdot)-\varphi(t,\cdot)\|_{L^2}\leqslant\|\ml{E}_0(t,\cdot)\|_{L^2}+\|\ml{E}_1(t,\cdot)\|_{L^2}+\|\ml{E}_2(t,\cdot)\|_{L^2}=o\big(\ml{B}_n(t)\big)
\end{align*}
for large-time $t\gg1$, and our proof is complete now.

\section*{Acknowledgments}
The author thanks Ryo Ikehata (Hiroshima University)  for some suggestions in the paper.

\end{document}